\def\coralreport{0}
\newcommand{\ones}{\mathds{1}}
\newcommand{\epsopt}{\epsilon_\mathrm{opt}}
\newcommand{\nuopt}{\nu_\mathrm{opt}}
\newcommand{\rednu}{\theta_\nu}
\newcommand{\redeps}{\theta_\epsilon}
\newcommand{\clarkesub}{{\bar\partial}}
\newcommand{\clarkeepssub}{\clarkesub_\epsilon}
\newcommand{\hifoo}{{\sc hifoo}}
\newcommand{\hanso}{{\sc hanso}}
\newcommand{\Hifoo}{{\sc Hifoo}}
\newcommand{\red}[1]{\textcolor{red}{#1}}
\DeclareMathOperator*{\co}{conv}
\newtheorem{prop}{Property}
\newtheorem{examp}{Example}
\numberwithin{corollary}{section}
\begin{document}

\title{Gradient Sampling Methods for Nonsmooth Optimization}

  \author{J.V.~Burke\thanks{Department of Mathematics, University of Washington, Seattle, WA.\ \texttt{jvburke01@gmail.com}.\ Supported in part by the U.S.\ National Science Foundation grant DMS-1514559.} 
  \and F.E.~Curtis\thanks{Department of Industrial and Systems Engineering, Lehigh University, Bethlehem, PA.\ \texttt{frank.e.curtis@gmail.com}.\ Supported in part by the U.S.\ Department of Energy grant DE-SC0010615.}
  \and A.S.~Lewis\thanks{School of Operations Research and Information Engineering, Cornell University, Ithaca, NY.\ \texttt{adrian.lewis@cornell.edu}.\ Supported in part by the U.S.\ National Science Foundation grant DMS-1613996. }
  \and M.L.~Overton\thanks{Courant Institute of Mathematical Sciences, New York University.\ \texttt{mo1@nyu.edu}.\ Supported in part by the U.S.\ National Science Foundation grant DMS-1620083.}
  \and L.E.A.~Sim\~oes\thanks{Department of Applied Mathematics, University of Campinas, Brazil.\ \texttt{simoes.lea@gmail.com}.\ Supported in part by the S\~ao Paulo Research Foundation (FAPESP), Brazil, under
  grants 2016/22989-2 and 2017/07265-0.}
  }

\maketitle

\centerline{Submitted to: \emph{Special Methods for Nonsmooth Optimization}, Springer, 2018}
\centerline{A.~Bagirov, M.~Gaudioso, N.~Karmitsa and M.~M\"akel\"a, eds.}

\bigskip

\begin{abstract}
This paper reviews the gradient sampling methodology for solving nonsmooth, nonconvex optimization problems.  An intuitively straightforward gradient sampling algorithm is stated and its convergence properties are summarized.  Throughout this discussion, we emphasize the simplicity of gradient sampling as an extension of the steepest descent method for minimizing smooth objectives.  We then provide overviews of various enhancements that have been proposed to improve practical performance, as well as of several extensions that have been made in the literature, such as to solve constrained problems.  The paper also includes clarification of certain technical aspects of the analysis of gradient sampling algorithms, most notably related to the assumptions one needs to make about the set of points at which the objective is continuously differentiable. Finally, we discuss possible future research directions.
\end{abstract}

\section{Introduction}\label{sec.introduction}

The Gradient Sampling (GS) algorithm is a conceptually simple descent method for solving nonsmooth, nonconvex optimization problems, yet it is one that possesses a solid theoretical foundation and has been employed to substantial success in a wide variety of applications.  Since the appearance of the fundamental algorithm and its analysis little over a dozen years ago, GS has matured into a comprehensive methodology.  Various enhancements have been proposed that make it a competitive approach in many nonsmooth optimization contexts, and it has been extended in various interesting ways, such as for nonsmooth optimization on manifolds and for constrained problems.  The purpose of this work is to provide background and motivation for the development of the GS method, discuss its theoretical guarantees, and provide an overview of the enhancements and extensions that have been the subject of research over recent years.

The underlying philosophy of GS is that virtually any nonsmooth objective function of interest is differentiable almost everywhere; in particular, this is true if the objective $f : \R{n} \to \R{}$ is either locally Lipschitz continuous or semialgebraic.  In such cases, when $f$ is evaluated at a randomly generated point $x \in \R{n}$, it is differentiable there with probability one.  This means that an algorithm can rely on an ability to obtain the objective function value $f(x)$ and gradient~$\nabla f(x)$, as when $f$ is smooth, rather than require an oracle to compute a subgradient.  In most interesting settings, $f$ \emph{is not} differentiable at its local minimizers, but, under reasonable assumptions, the carefully crafted mechanisms of the GS algorithm generate a sequence of iterates---at which~$f$ \emph{is} differentiable---converging to stationarity.

At the heart of GS is a stabilized steepest descent approach.  When $f$ is differentiable at~$x$, the negative gradient $-\nabla f(x)$ is, of course, the traditional steepest descent direction for~$f$ at~$x$ in the 2-norm in that
\bequation\label{eq.steepest_descent}
  -\frac{\nabla f(x)}{\|\nabla f(x)\|_2} = \arg\min_{\|d\|_2 \leq 1}\ \nabla f(x)^Td.
\eequation
However, when $x$ is near a point where $f$ is not differentiable, it may be necessary to take a very short step along $-\nabla f(x)$ to obtain decrease in $f$.  It is for this reason that the traditional steepest descent method may converge to nonstationary points when $f$ is nonsmooth.\footnote{Although this fact has been known for decades, most of the examples that appear in the literature are rather artificial since they were designed with exact line searches in mind.  Analyses showing that the steepest descent method with inexact line searches converges to nonstationary points of some simple convex nonsmooth functions have appeared recently in~\cite{AslOver17,GuoLewi17}.}  The GS algorithm stabilizes the choice of the search direction to avoid this issue.  In each iteration, a descent direction from the current iterate $x^k$ is obtained by supplementing the information provided by $\nabla f(x^k)$ with gradients evaluated at randomly generated points $\{x^{k,1},\dots,x^{k,m}\} \subset \Bmbb(x^k,\epsilon_k) := \{x \in \R{n} : \|x - x^k\|_2 \leq \epsilon_k\}$, which are \emph{near} $x^k$, and then computing the minimum-norm vector $g^k$ in the convex hull of these gradients.  This choice can be motivated by the goal that, with $\clarkeepssub f(x)$ 
denoting the Clarke $\epsilon$-subdifferential of $f$ at $x$ (see \S\ref{sec.theory}),
\bequation
  -\frac{g^k}{\|g^k\|_2} \approx \arg\min_{\|d\|_2 \leq 1} \max_{g\in\clarkesub f_\epsilon(x)} g^Td;
\eequation
i.e., $-g^k$ can essentially be viewed as a steepest descent direction for $f$ from $x^k$ in a more ``robust'' sense.  A line search is then used to find a positive stepsize $t_k$ yielding decrease in $f$, i.e., $f(x^k - t_kg^k) < f(x^k)$.  The sampling radius $\epsilon_k$ that determines the meaning of ``\emph{near} $x^k$'' may either be fixed or adjusted dynamically.

A specific instance of the GS algorithm is presented in \S\ref{sec.algorithm}.  Its convergence guarantees are summarized in~\S\ref{sec.theory}.  We then present various enhancements and extensions of the approach in \S\ref{sec.enhancements} and \S\ref{sec.extensions}, respectively, followed by a discussion of some successful applications of the GS methodology in \S\ref{sec.applications}.  Throughout this work, our goal is to emphasize the \emph{simplicity} of the fundamental GS strategy.  We believe that this, in addition to its strong convergence properties for locally Lipschitz optimization, makes it an attractive choice when attempting to solve difficult types of nonsmooth optimization problems.

Although the first convergence analysis of a GS algorithm was given by Burke, Lewis, and Overton in \cite{BurkLewiOver05}, an earlier version of the method was presented by these authors in~\cite{BurkLewiOver02b}. That algorithm, originally called a ``gradient bundle'' method, was applied to a function that was not only nonconvex and nonsmooth, but also non-locally-Lipschitz, namely, the spectral abscissa---i.e., the largest of the real parts of the eigenvalues---of a linear matrix function $A$ mapping a parameter vector $x$ to the space of nonsymmetric square matrices.  The spectral abscissa is not locally Lipschitz at a matrix $\bar X$ when an eigenvalue of $\bar X$ with largest real part has multiplicity two or more \cite{BurkOver01}, but it is semialgebraic and, hence, differentiable almost everywhere, so a GS algorithm was applicable.  The method was surprisingly effective.  As anticipated, in most cases the apparent local minimizers that were approximated had the property that the eigenvalues of $A$ with largest real part had multiplicity two or more.  An illustration that appeared in \cite{BurkLewiOver02b} is reproduced in Figure~\ref{grad_bundle}; the extremely ``steep" contours of the objective function indicate its non-Lipschitzness.  Obtaining theoretical results for a GS algorithm applied to non-locally-Lipschitz problems seems very
challenging; we discuss this issue further in \S\ref{sec.nonlip}, after describing the substantial body of theory that has
been developed for the locally Lipschitz case in \S\ref{sec.global} and \S\ref{sec.local}.

\begin{figure}[ht]
  \centering
  \includegraphics[height=2.5in]{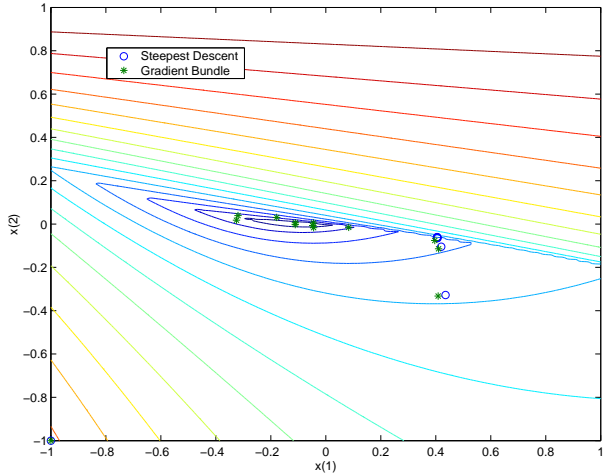}
  \caption{Contours of the spectral abscissa of an affine matrix family given in \cite{BurkLewiOver02b}.  Iterates of the ordinary gradient (``steepest descent") method with a line search are shown (small circles) along with those of the gradient sampling (``gradient bundle") algorithm (asterisks).  Both start at $(-1,-1)$.}
  \label{grad_bundle}
\end{figure}

\section{Algorithm GS}\label{sec.algorithm}

We now state a specific variant of the GS algorithm. We start by assuming only that the objective function~$f$ is locally Lipschitz over $\R{n}$, which implies, by Rademacher's theorem \cite{Clar83}, that $f$ is differentiable almost everywhere.  As previously mentioned, at the heart of the algorithm is the computation of a descent direction by finding the minimum norm element of the convex hull of gradients obtained about each iterate.  The remaining procedures relate to the line search to obtain decrease in $f$ and the selection of a subsequent iterate so that $f$ is differentiable at $\{x^k\}$.

\balgorithm[ht]
  \renewcommand{\thealgorithm}{GS}
  \caption{(Gradient Sampling)}
  \label{alg.gs}
  \balgorithmic[1]
    \Require initial point $x^0$ at which $f$ is differentiable, initial sampling radius $\epsilon_0 \in (0,\infty)$, initial stationarity target $\nu_0 \in [0,\infty)$, sample size $m \geq n+1$, line search parameters $(\beta,\gamma) \in (0,1) \times (0,1)$, termination tolerances $(\epsopt,\nuopt) \in [0,\infty) \times [0,\infty)$, and reduction factors $(\redeps,\rednu) \in (0,1] \times (0,1]$
    \For{$k \in \N{}$}
      \State independently sample $\{x^{k,1},\dots,x^{k,m}\}$ uniformly from $\Bmbb(x^k,\epsilon_k)$ \label{step.sample}
      \State compute $g^k$ as the solution of $\min_{g\in\Gcal^k} \thalf \|g\|_2^2$, where $\Gcal^k := \conv\{\nabla f(x^k),\nabla f(x^{k,1}),\dots,\nabla f(x^{k,m})\}$ \label{step.qp}
      \State \textbf{if} $\|g^k\|_2 \leq \nuopt$ and $\epsilon_k \leq \epsopt$ \textbf{then} terminate \label{step.terminate}
      \State \textbf{if} $\|g^k\|_2 \leq \nu_k$
      \State \qquad \textbf{then} set $\nu_{k+1} \gets \rednu\nu_k$, $\epsilon_{k+1} \gets \redeps\epsilon_k$, and $t_k \gets 0$
      \State \qquad \textbf{else}  set $\nu_{k+1} \gets \nu_k$, $\epsilon_{k+1} \gets \epsilon_k$, and \label{step.armijo}
      \bequation\label{eq.armijo}
        t_k \gets \max\left\{ t \in \{1,\gamma,\gamma^2,\dots\} : f(x^k - t g^k) < f(x^k) - \beta t \|g^k\|_2^2 \right\}
      \eequation
      \State \textbf{if} $f$ is differentiable at $x^k - t_k g^k $ \label{step.check}
      \State \qquad \textbf{then} set $x^{k+1} \gets x^k - t_k g^k$
      \State \qquad \textbf{else} set $x^{k+1}$ randomly as any point where $f$ is differentiable such that \label{step.update}
      \bequation\label{eq.update}
        f(x^{k+1}) < f(x^k) - \beta t_k \|g^k\|_2^2\ \ \text{and}\ \ \|x^k - t_k g^k - x^{k+1}\|_2 \leq \min\{t_k,\epsilon_k\} \|g^k\|_2
      \eequation
    \EndFor
  \ealgorithmic
\ealgorithm

While the essence of the methods from \cite{BurkLewiOver02b} and \cite{BurkLewiOver05} remains intact, Algorithm~\ref{alg.gs} differs in subtle yet important ways from the methods presented in these papers, as we now explain.
\begin{enumerate}
  \item Algorithm~\ref{alg.gs} incorporates a key modification proposed by Kiwiel in \cite[Alg.~2.1]{Kiwi07}, namely, the second inequality in \eqref{eq.update}; the version in \cite{BurkLewiOver05} used $\epsilon_k$ instead of $\min\{t_k,\epsilon_k\}$.  As Kiwiel explained, this minor change allowed him to drop the assumption in \cite{BurkLewiOver05} that the level set $\{ x: f(x) \leq f(x^0)\}$ is compact, strengthening the convergence results for the algorithm.
  \item A second change suggested in \cite[Alg.~2.1]{Kiwi07} is the introduction of the termination tolerances $\nuopt$ and $\epsopt$.  These were used in the computational experiments in \cite{BurkLewiOver05}, but not in the algorithm statement or analysis.  Note that if $\epsopt$ is set to zero, then Algorithm~\ref{alg.gs} never terminates since $\epsilon_k$ can never be zero, though it may happen that one obtains $\|g^k\|_2 = 0$.
  \item A third change, also suggested by Kiwiel, is the usage of the nonnormalized search direction $-g^k$ (originally used in \cite{BurkLewiOver02b}) instead of the normalized search direction $-g^k/\|g^k\|_2$ (used in \cite{BurkLewiOver05}). The resulting inequalities in \eqref{eq.armijo} and \eqref{eq.update} are taken from \cite[Sec.~4.1]{Kiwi07}. This choice does not affect the main conclusions of the convergence theory as
in both cases it is established \cite{BurkLewiOver05,Kiwi07} that the stepsize $t_k$ can be determined by a finite process. 
However, since Theorem \ref{th.eps_to_zero} below shows that a subsequence of $\{g^k\}$ converges to zero under reasonable conditions, one expects that fewer function evaluations should be required by the line search asymptotically when using the nonnormalized search direction,
whereas using the normalized direction may result
in the number of function evaluations growing arbitrarily large \cite[Sec.~4.1]{Kiwi07}. Furthermore, our practical experience is 
consistent with this viewpoint. 
  
  \item Another aspect of Algorithm~\ref{alg.gs} that is different in both \cite{BurkLewiOver05} and \cite{Kiwi07} concerns the randomization procedure in Step~\ref{step.sample}.  In the variants given in those papers, it was stated that the algorithm terminates if $f$ is not \emph{continuously} differentiable at the randomly sampled points $\{x^{k,1},\dots,x^{k,m}\}$.  In the theorem stated in the next section, we require only that $f$ is differentiable at the sampled points.  Since by Rademacher's theorem and countable additivity of probabilities
this holds for every sampled point with probability one, we do not include a termination condition here. 
  \item Finally, Steps \ref{step.check}--\ref{step.update} in Algorithm~\ref{alg.gs} \emph{do} require explicit checks that ensure that $f$ is differentiable at~$x^{k+1}$, but unlike in the variants in \cite{BurkLewiOver05} and \cite{Kiwi07}, it is not required that $f$ be \emph{continuously} differentiable at $x^{k+1}$. This differentiability requirement is included since it is not the case that $f$ is differentiable at $x^k - t_k g^k$ with probability one, as is shown via an example in \cite{HeloSantSimo16}, discussed further in \S\ref{subsec.avoidcheck}. For a precise procedure for implementing Step~\ref{step.update}, see \cite{Kiwi07}.
\end{enumerate}

The computation in Step~\ref{step.qp} requires solving a strongly convex quadratic optimization problem (QP) to compute the minimum-norm element of the convex hull of the current and sampled gradients, or, equivalently, to compute the 2-norm projection of the origin onto this convex hull.  It is essentially the same operation required in every iteration of a bundle method. To see this, observe that solving the QP in Step~\ref{step.qp} can be expressed, with 
\bequationNN
  G_k := \bbmatrix \nabla f(x^k) & \nabla f(x^{k,1}) & \cdots & \nabla f(x^{k,m}) \ebmatrix,
\eequationNN
as computing $(z_k,d^k,\lambda^k) \in \R{} \times \R{n} \times \R{m+1}$ as the primal-dual solution of the QPs
\bequation\label{prob.qp}
  \left\{
    \baligned
      \min_{(z,d) \in \R{} \times \R{n}} &\ z + \thalf \|d\|_2^2 \\
      \st &\ G_k^Td \leq z \ones
    \ealigned
  \right\}
  \quad
  \text{and}
  \quad
  \left\{
    \baligned
      \min_{\lambda \in \R{m+1}} &\ \thalf \|G_k\lambda\|_2^2 \\
      \st &\ \ones^T\lambda = 1,\ \lambda \geq 0
    \ealigned
  \right\}.
\eequation
The latter problem, yielding $G_k\lambda^k = g^k$, can easily be seen to be equivalent to solving the subproblem $\min_{g\in\Gcal^k} \thalf \|g\|_2^2$ stated in Step~\ref{step.qp}, whereas the former problem, yielding $d^k = -g^k$, can be seen to have the same form as the subproblems arising in bundle methods.

Normally, the initial stationarity target $\nu_0$ is chosen to be positive and the reduction factors $\rednu$ and $\redeps$ are chosen to be less than one so that the stationarity target and sampling radius are reduced every time the condition $\|g^k\|\leq\nu_k$ is satisfied. However, it is also interesting to consider the variant with $\nu_0=0$ and $\redeps=1$, forcing the algorithm to run forever with $\epsilon$ fixed unless it terminates with $g^k=0$ for some $k \in \N{}$.  We consider both of these variants in the global convergence theory given in the next section.

\section{Convergence Theory for Algorithm~\ref{alg.gs}}\label{sec.theory}

We begin with some definitions.  In the locally Lipschitz case, the Clarke subdifferential of $f$ at $x \in \R{n}$ is 
defined by the convex hull of the limits of gradients of $f$ on sequences converging to $x$ \cite[Def.\ 1.1]{Clar75}, i.e.,
\bequation\label{eq.clarkesub}
  \clarkesub f(x) = \conv \left\{ \lim_{j\to\infty} \nabla f(y^j) : \{y^j\} \to x\ \text{where } f \text{ is differentiable at } y^j \text{ for all }  j \in \N{} \right\}.
\eequation
A point $x$ is Clarke stationary for $f$ if $0 \in \clarkesub f(x)$.  A more ``robust'' sense of stationarity with respect to $f$ can be defined by considering the limits of gradients corresponding to limiting points \emph{near} $x$; in particular, given a radius $\epsilon \in [0,\infty)$, the Clarke $\epsilon$-subdifferential \cite{Gold77} is given by\footnote{The definition in \cite{BurkLewiOver05} includes a closure operation but this is unnecessary.}
\bequation\label{eq.clarkeepssub}
  \clarkeepssub f(x) = \conv \{\clarkesub f(\Bmbb(x,\epsilon))\}.
\eequation
A point $x$ is Clarke $\epsilon$-stationary for $f$ if $0 \in \clarkeepssub f(x)$.  For all practical purposes, one cannot generally evaluate $\clarkesub f$ (or $\clarkeepssub f$) at (or near) any point where $f$ is not differentiable.  That said, Algorithm~\ref{alg.gs} is based on the idea that one can approximate the minimum norm element in $\clarkeepssub f(x^k)$ through random sampling of gradients in the ball $\Bmbb(x^k,\epsilon)$. To a large extent this idea is motivated by \cite{BurkLewiOver02a} which investigates how well the entire Clarke subdifferential $\clarkesub f(x)$
can be approximated through random sampling. However, the results in \cite{BurkLewiOver02a} cannot be directly exploited in the analysis of
the GS algorithm because the gradients are sampled only at a finite number of points near any given iterate.

\subsection{Global Convergence Guarantees}\label{sec.global}

A critical aspect of theoretical convergence guarantees for Algorithm~\ref{alg.gs} concerns the set of points where~$f$ is \emph{continuously} differentiable, which we denote by $D$.  Consideration of $D$ played a crucial role in the analysis in both \cite{BurkLewiOver05} and \cite{Kiwi07}, but there were some oversights concerning both the requirements of the algorithm with respect to $D$ and the assumptions on $D$. Regarding the requirements of the algorithm with respect to $D$, there is actually no need, from a theoretical point of view, for either the iterates $\{x^k\}$ or the randomly generated sampled points $\{x^{k,j}\}$ to lie in $D$; all that is needed is that $f$ is differentiable at these points.  Most implementations of GS algorithms do not attempt to check any form of differentiability in any case, but if one were to attempt to implement such a check, it is certainly more tractable to check for differentiability than continuous differentiability.  Regarding the assumptions on $D$, in the theorems that we state below, we assume that $D$ is an open set with full measure in $\R{n}$. In contrast, the relevant assumption stated in \cite{BurkLewiOver05,Kiwi07} is weaker, namely, that $D$ is an open dense subset of $\R{n}$. However, the proofs of convergence actually require the full measure assumption on $D$ that we include below.\footnote{This oversight went unnoticed for 12 years until J.~Portegies and T.~Mitchell brought it to our attention recently.}

There are three types of global convergence guarantees of interest for Algorithm~\ref{alg.gs}: one when the input parameters ensure that $\{\epsilon_k\} \searrow 0$, one when $\epsilon_k $ is repeatedly reduced but a positive stopping tolerance prevents it from converging to zero, and one when $\epsilon_k=\epsilon > 0$ for all $k$.  These lead to  different properties for the iterate sequence.  The first theorem below relates to cases when the stationarity tolerance and sampling radius tolerance are both set to zero so that the algorithm can never terminate.

\btheorem\label{th.eps_to_zero}
  Suppose that $f$ is locally Lipschitz in $\R{n}$ and continuously differentiable on an open set $D$ with full measure in $\R{n}$.  Suppose further that Algorithm~\ref{alg.gs} is run with $\nu_0 > 0$, $\nuopt = \epsopt = 0$, and strict reduction factors $\rednu < 1$ and $\redeps < 1$.  Then, with probability one, Algorithm~\ref{alg.gs} is well defined in the sense that the gradients in Step \ref{step.qp} exist in every iteration, the algorithm does not terminate, and either
  \benumerate
    \item[(i)] $f(x^k) \searrow -\infty$, or
    \item[(ii)] $\nu_k \searrow 0$, $\epsilon_k\searrow 0$, and every cluster point of $\{x^k\}$ is Clarke stationary for $f$.
  \eenumerate
\etheorem

Theorem~\ref{th.eps_to_zero} is essentially the same as \cite[Thm 3.3]{Kiwi07} (with the modifications given in~\cite[\S4.1]{Kiwi07} for nonnormalized directions), except for two aspects:
\benumerate
  \item The proof given in \cite{Kiwi07} implicitly assumes that $D$ is an open set with full measure, as does the proof of \cite[Thm 3.4]{BurkLewiOver05} on which Kiwiel's proof is based, although the relevant assumption on $D$ in both papers is the weaker condition that $D$ is an open dense set. Details are given in Appendix \ref{app.fullmeasure}.

\item In the algorithms analyzed in \cite{Kiwi07} and \cite{BurkLewiOver05}, the iterates $\{x^k\}$ and the randomly sampled points $\{x^{k,j}\}$ were enforced to lie in the set $D$ where $f$ is continuously differentiable.  We show in Appendix \ref{app.fdifOK} that the theorem still holds when this requirement is relaxed to ensure only that~$f$ is differentiable at these points.

\eenumerate

As Kiwiel argues, Theorem~\ref{th.eps_to_zero} is essentially the best result that could be expected.  Furthermore, as pointed out in \cite[Remark 3.7(ii)]{Kiwi07},
it leads immediately to the following corollary.

\bcorollary
  Suppose that $f$ is locally Lipschitz in $\R{n}$ and continuously differentiable on an open set $D$ with full measure in $\R{n}$.  Suppose further that Algorithm~\ref{alg.gs} is run with $\nu_0 > \nuopt > 0$, $\epsilon_0 > \epsopt > 0$, and strict reduction factors $\rednu < 1$ and $\redeps < 1$.  Then, with probability one, Algorithm~\ref{alg.gs} is well defined in the sense that the gradients in Step \ref{step.qp} exist at every iteration, and either
  \benumerate
    \item[(i)] $f(x^k) \searrow -\infty$, or
    \item[(ii)] Algorithm~\ref{alg.gs} terminates by the stopping criteria in Step~\ref{step.terminate}.
  \eenumerate
\ecorollary

The final result that we state concerns the case when the sampling radius is fixed.  A proof of this result is essentially given by that of \cite[Thm 3.5]{Kiwi07}, again taking into account the comments in Appendices \ref{app.fullmeasure}
and \ref{app.fdifOK}.

\btheorem\label{th.eps_pos} 
  Suppose that $f$ is locally Lipschitz in $\R{n}$ and continuously differentiable on an open set $D$ with full measure in $\R{n}$.  Suppose further that Algorithm~\ref{alg.gs} is run with $\nu_0 = \nuopt = 0$, $\epsilon_0 = \epsopt = \epsilon > 0$, and $\redeps = 1$.  Then, with probability one, Algorithm~\ref{alg.gs} is well defined in the sense that the gradients in Step~\ref{step.qp} exist at every iteration, and one of the following occurs:
  \benumerate
    \item[(a)] $f(x^k) \searrow -\infty$, or
    \item[(b)]  Algorithm~\ref{alg.gs} terminates for some $k \in \N{}$ with $g^k = 0$, or
    \item[(c)] there exists $\Kcal \subseteq \N{}$ with $\{g^k\}_{k\in \Kcal} \to 0$ and every cluster point of $\{x^k\}_{k\in \Kcal}$ is Clarke $\epsilon$-stationary for $f$.
  \eenumerate
\etheorem

Of the five open questions regarding the convergence analysis for gradient sampling raised in \cite{BurkLewiOver05}, three were answered explicitly by Kiwiel in \cite{Kiwi07}.  Another open question was: ``Under what conditions can one guarantee that the GS algorithm terminates finitely?" This was posed in the context of a fixed sampling radius and therefore asks how one might know whether outcome (b) or (c) occurs in Theorem \ref{th.eps_pos}, assuming $f$ is bounded below. This remains open, but Kiwiel's introduction of the termination tolerances in the GS algorithm statement led to Corollary 3.1 which guarantees that when the sampling radius is reduced dynamically and the tolerances are nonzero, Algorithm \ref{alg.gs} must terminate if $f$ is bounded below. The only other open question concerns extending the convergence analysis to the non-Lipschitz case.

Overall, Algorithm~\ref{alg.gs} has a very satisfactory convergence theory in the locally Lipschitz case.  Its main weakness is its per-iteration cost, most notably due to the need to compute $m \geq n+1$ gradients in every iteration and solve a corresponding~QP.  However, enhancements to the algorithm have been proposed that can vastly reduce this per-iteration cost while maintaining these guarantees.  We discuss these and other enhancements in \S\ref{sec.enhancements}.

\subsection{A Local Linear Convergence Result}\label{sec.local} 

Given the relationship between gradient sampling and a traditional steepest descent approach, one might ask if there are scenarios in which Algorithm~\ref{alg.gs} can attain a linear rate of local convergence.  The study in~\cite{HeloSantSimo17} answers this in the affirmative, at least in a certain probabilistic sense. If $(i)$ the set of sampled points is \emph{good} in a certain sense described in \cite{HeloSantSimo17}, $(ii)$ the objective function $f$ belongs to a class of functions defined by the maximum of a finite number of smooth functions (``finite-max" functions), and $(iii)$ the input parameters are set appropriately, then Algorithm~\ref{alg.gs} will produce a step yielding a reduction in $f$ that is significant.  This analysis involves $\Vcal\Ucal$-decomposition ideas~\cite{LemaOustSaga00,Lewi02,MiffSaga05}, where in particular it is shown that the reduction in $f$ is comparable to that achieved by a steepest descent method restricted to the smooth $\Ucal$-space of $f$.  This means that a linear rate of local convergence can be attained over any infinite subsequence of iterations in which the sets of sampled points are \emph{good}.

\subsection{The Non-Lipschitz Case}\label{sec.nonlip}

In the non-locally Lipschitz case, the Clarke subdifferential $\clarkesub f$ is defined in \cite[p.~573]{BurkLewiOver02a};
unlike in the Lipschitz case, this set may be unbounded, presenting obvious difficulties for approximating it through
random sampling of gradients. In fact, more than half of \cite{BurkLewiOver02a} is devoted to investigating this issue,
relying heavily on modern variational analysis as expounded in \cite{RockWets98}.
Some positive results were obtained, specifically in the case that $f$ is ``directionally Lipschitz" at $\bar x$, which
means that the ``horizon cone" \cite[p.~572]{BurkLewiOver02a} of $f$ at $\bar x$ is pointed, that is, it does not contain a line. 
For example, this excludes the function on $\R{}$ defined by $f(x)=|x|^{1/2}$ at $\bar x=0$, but
it applies to the case $f(x)=(\max(0,x))^{1/2}$ even at $\bar x=0$. The discussion of the directionally
Lipschitz case culminates with Corollary 6.1, which establishes that the Clarke subdifferential can indeed be approximated
by convex hulls of gradients. On the more negative side, Example 7.2 shows that this approximation can fail badly in
the general Lipschitz case.  Motivated by these results, Burke and Lin have recently extended the GS convergence theory 
to the directionally Lipschitz case \cite{BurkLin17,Lin09}. However, it would seem difficult to
extend these results to the more general non-Lipschitz case.

Suppose $f:\R{n\times n}\rightarrow \R{}$ is defined by
\[
               f(X) = \max\{\mathrm{Re~} \lambda: \det(\lambda I - X) = 0\},
\]
the spectral abscissa (maximum of the real parts of the eigenvalues) of $X$. Assume that $\bar X$ has the
property that its only eigenvalues whose real parts coincide with $f$ make up a zero eigenvalue with multiplicity $q$
associated with a single Jordan block (the generic case).
In this case the results in \cite{BurkOver01} tell us that the horizon cone of $f$ is pointed at $\bar X$ if and only if the
multiplicity $q\leq 2$; on the other hand $f$ is locally Lipschitz at $X$ if and only if $q=1$.  In the light of the previous
paragraph, one might expect much greater practical success in applying GS to minimize the spectral abscissa of
a parameterized matrix if the optimal multiplicities are limited to 1 or 2. However, this seems not to be the case. The results
reported in \cite{BurkLewiOver02b} for unconstrained spectral abscissa minimization, as well as results for
applying  the algorithm of \cite{CurtOver12} (see  \S\ref{subsec.constrained} below)
for constrained nonsmooth, nonconvex optimization to problems
with spectral radius objective and constraints, as reported in \cite[Sec.~4.2 \red{and Appendix~A.1}]{CurtMitcOver17}, do not show any marked deterioration
as the optimal multiplicities increase from 2 or 3, although certainly the problems are much more challenging for
larger multiplicities. We view understanding the rather remarkably good behavior of the GS algorithm on such examples
as a potentially rewarding, though certainly challenging, line of investigation.

\section{Enhancements}\label{sec.enhancements}

As explained above, the statement of Algorithm~\ref{alg.gs} differs in several ways from the algorithms stated in \cite{BurkLewiOver02b}, \cite{BurkLewiOver05}, and \cite{Kiwi07}.  Other variants of the strategy have also been proposed in recent years, in some cases to pose new solutions to theoretical obstacles (such as the need, in theory, to check for differentiability of $f$ at each new iterate), and in others to enhance the practical performance of the approach.  In this section, we discuss a few of these enhancements.

\subsection{Restricting the Line Search to within a Trust Region}

Since the gradient information about $f$ is obtained only within the ball $\Bmbb(x^k,\epsilon_k)$ for all $k \in \N{}$, and since one might expect that smaller steps should be made when the sampling radius is small, an argument can be made that the algorithm might benefit by restricting the line search to within the ball $\Bmbb(x^k,\epsilon_k)$ for all $k \in \N{}$.  In \cite[\S4.2]{Kiwi07}, such a variant is proposed where in place of $-g^k$ the search direction is set as $-\epsilon_k g^k/\|g^k\|_2$.  With minor corresponding changes to conditions \eqref{eq.armijo} and \eqref{eq.update}, all of the theoretical convergence guarantees of the algorithm are maintained.  Such a variant with the trust region radius defined as a positive multiple of the sampling radius $\epsilon_k$ for all $k \in \N{}$ would have similar properties.  This variant might perform well in practice, especially in situations when otherwise setting the search direction as $-g^k$ would lead to significant effort being spent in the line search.

\subsection{Avoiding the Differentiability Check}\label{subsec.avoidcheck}

The largest distraction from the fundamentally simple nature of Algorithm~\ref{alg.gs} is the procedure for choosing a perturbed subsequent iterate if $f$ is not differentiable at $x^k - t_kg^k$; see Steps~\ref{step.check}--\ref{step.update}.  This procedure is necessary for the algorithm to be well defined since, to ensure that $-g^k$ is a descent direction for all $k \in \N{}$, the algorithm relies on the existence of and ability to compute $-\nabla f(x^k)$ for all $k \in \N{}$.  One might hope that this procedure, while necessary for theoretical convergence guarantees, could be ignored in practice.  However, due to the deterministic nature of the line search, situations exist in which landing on a point of nondifferentiability of $f$ occurs with positive probability.

For example, consider the function $f : \R{2} \to \R{}$ given by
\bequationNN
  f(w,z) = \max\{ 0.5w^2 + 0.1z,w + 0.1z + 1, -w + 0.1z + 1, -0.05z - 50 \};
\eequationNN
see Figure~\ref{lucas_example}.  As shown by Helou, Santos, and Sim\~oes in \cite{HeloSantSimo16}, if Algorithm~\ref{alg.gs} is initialized at $x^0 = (w_0,z_0)$ chosen anywhere in the unit ball centered at $(10,10)$, then there is a positive probability that the function $f$ will not be differentiable at $x^0 - t_0g^0$.  This can be explained as follows. At any point in the unit ball centered at $(10,10)$, the function $f$ is continuously differentiable and $\nabla f(x^0) = [w_0;\  0.1]^T$. Moreover, there is a positive probability that the sampled points obtained at this first iteration will yield $g^0 = \nabla f(x^0)$. Therefore, given a reasonable value for the parameter $\beta$ that appears in~\eqref{eq.armijo} (e.g., $\beta = 10^{-4}$), the sufficient decrease of the function value is attained with $t_0 = 1$. This guarantees that the function $f$ will not be differentiable at the next iterate, since the first coordinate of $x^1 = (w_1,z_1)$ will be zero.

\begin{figure}[ht]
  \centering
  \includegraphics[height=2.5in]{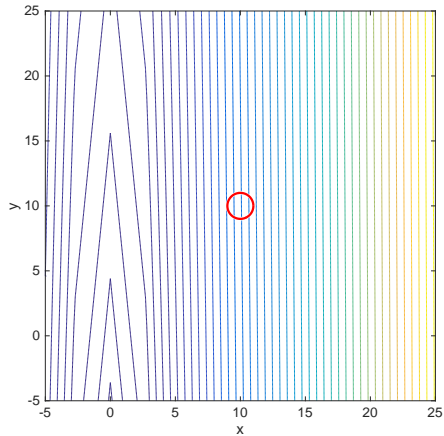}
  \caption{Contours of a function illustrating the necessity of the differentiability check in Algorithm~\ref{alg.gs}.  Initialized uniformly within the illustrated ball, there is a positive probability that $x^0 - t_0g^0 \not\in D$.}
  \label{lucas_example}
\end{figure}

The authors of \cite{HeloSantSimo16} propose two strategies to avoid the issues highlighted by this example.  The first is that, rather than perturb the iterate after the line search, one could perturb the search direction before the line search.  It is shown that if the random perturbation of the search direction is sufficiently small such that, for one thing, the resulting direction is still one of descent for $f$, then $f$ will be differentiable at all iterates with probability one.  Their second proposed strategy involves the use of a nonmonotone line search.  In particular, it is shown that if a strictly positive value $\Delta_k$ is added on the right-hand side of the sufficient decrease condition in \eqref{eq.armijo} such that $\{\Delta_k\}$ is summable, then one can remove $\nabla f(x^k)$ from the set $\Gcal_k$ for all $k \in \N{}$ and maintain convergence guarantees even when $f$ is \emph{not} differentiable at $\{x^k\}$.  This can be shown by noticing that, due to the positive term $\Delta_k$, the line search procedure continues to be well defined even if~$-g^k$ is not a descent direction for $f$ at $x^k$ (which may happen since $\nabla f(x_k)$ is no longer involved in the computation of~$g^k$). However, the summability of $\{\Delta_k\}$ implies that the possible increases in~$f$ will be finite and, when the sampled points are good enough to describe the local behavior of $f$ around the current iterate, the function value will necessarily decrease if the method has not reached (approximate) stationarity.  Overall, the sufficient reductions in $f$ achieved in certain iterations will ultimately exceed any increases.

Another proposal for avoiding the need to have $f$ differentiable at $x^k$ for all $k \in \N{}$ is given in \cite[\S4.3]{Kiwi07}, wherein a technique is proposed for using a limited line search, potentially causing the algorithm to take null steps in some iterations.  In fact, this idea of using a limited line search can also be used to avoid the need to sample a new set of $m \geq n+1$ gradients in each iteration, as we discuss next.

\subsection{Adaptive Sampling}\label{sec.adaptive}

As previously mentioned, the main weakness of Algorithm~\ref{alg.gs} is the cost of computing $m \geq n+1$ gradients in every iteration and solving a corresponding QP.  Loosely speaking, this lower bound on the number of gradients is required in the analysis of the algorithm so that one can use Carath\'eodory's theorem to argue that, with at least $n+1$ gradients, there is a sufficiently good chance that the combination vector $-g^k$ represents a sufficiently good direction from $x^k$.  However, in many situations in practice, the sampling of such a large number of gradients in each iteration can lead to a significant amount of wasted computational effort.  One can instead sample \emph{adaptively}, attempting to search along directions computed using fewer gradients and proceeding as long as a sufficient reduction is attained.

In \cite{CurtQue13}, Curtis and Que show how such an adaptive sampling strategy can be employed so that the convergence guarantees of Algorithm~\ref{alg.gs} are maintained while only a constant number (independent of $n$) of gradients need to be sampled in each iteration.  A key aspect that allows one to maintain these guarantees is the employment of a limited line search, as first proposed in \cite{Kiwi07}, potentially leading to a null step when fewer than $n+1$ gradients are currently in hand and
when the line search is not successful after a prescribed finite number of function evaluations.  See also \cite{CurtQue15} for further development of these ideas, where it is shown that one might not need to sample \emph{any} gradients as long as a sufficient reduction is attained.

The work in \cite{CurtQue13} also introduces the idea that, when adaptive sampling is employed, the algorithm can exploit a practical feature commonly used in bundle methods.  This idea relates to warm-starting the algorithm for solving the QP subproblems.  In particular, suppose that one has solved the primal-dual pair of QPs in \eqref{prob.qp} for some $m \in \N{}$ to obtain $(z_k,d^k,\lambda^k)$, yielding $g^k = -d^k$.  If one were to subsequently aim to solve the pair of QPs corresponding to the augmented matrix of gradients
\bequationNN
  \overline{G}_k = \bbmatrix G_k & \nabla f(x^{k,m+1}) & \cdots & \nabla f(x^{k,m+p}) \ebmatrix,
\eequationNN
then one obtains a viable feasible starting point for the latter QP in \eqref{prob.qp} by augmenting the vector $\lambda^k$ with~$p$ zeros.  This can be exploited, e.g., in an active-set method for solving this QP; see \cite{Kiwi86}.

As a further practical enhancement, the work in \cite{CurtQue13,CurtQue15} also proposes the natural idea that, after moving to a new iterate $x^k$, gradients computed in previous iterations can be ``re-used'' if they correspond to points that lie within $\Bmbb(x^k,\epsilon_k)$.  This may further reduce the number of sampled points needed in practice.

\subsection{Second-Order-Type Variants}

The solution vector $d^k$ of the QP in \eqref{prob.qp} can be viewed as the minimizer of the model of $f$ at $x^k$ given by
\bequationNN
  q_k(d) = f(x^k) + \max_{g\in\Gcal_k}\ g^Td + \thalf d^TH_kd
\eequationNN
with $H_k=I$, the identity matrix.  As in other second-order-type methods for nonlinear optimization, one might also consider algorithm variants where $H_k$ is set to some other symmetric positive definite matrix.  Ideas of this type have been explored in the literature.  For example, in \cite{CurtQue13}, two techniques are proposed: one in which $H_k$ is set using a quasi-Newton updating strategy and one in which the matrix is set in an attempt to ensure that the model $q_k$ represents an upper bounding model for $f$.  The idea of employing a quasi-Newton approach, inspired by the success of quasi-Newton methods in practice for nonsmooth optimization (see \cite{LewiOver13}), has also been explored further in \cite{CurtQue15,CurtRobiZhou17}.

Another approach, motivated by the encouraging results obtained when employing spectral gradient methods to solve smooth~\cite{Flet05,BirgMartRayd14} and nonsmooth~\cite{CremLoreRayd07} optimization problems, has been to employ a Barzilai-Borwein (BB) strategy for computing initial stepsizes in a GS approach; see \cite{LoreAponCoreRayd17} and the background in~\cite{BarzBorw88,Rayd93,Rayd97}.  Using a BB strategy can be viewed as choosing $H_k = \alpha_k I$ for all $k \in \N{}$ where the scalar $\alpha_k$ is set according to iterate and gradient displacements in the latest iteration.

In all of these second-order-type approaches, one is able to maintain convergence guarantees of the algorithm as long as the procedure for setting the matrix $H_k$ is safeguarded during the optimization process.  For example, one way to maintain guarantees is to restrict each $H_k$ to the set of symmetric matrices whose eigenvalues are contained within a fixed positive interval.  One might also attempt to exploit the self-correcting properties of BFGS updating; see \cite{CurtRobiZhou17}.

\section{Extensions}\label{sec.extensions}

In this section, we discuss extensions to the GS methodology to solve classes of problems beyond unconstrained optimization on $\R{n}$.

\subsection{Riemannian GS for Optimization on Manifolds}

Hosseini and Uschmajew in \cite{HossUsch17} have extended the GS methodology for minimizing a locally Lipschitz $f$ over a set $\Mcal$, where $\Mcal$ is a complete Riemannian manifold of dimension $n$.  The main idea of this extension is to employ the convex hull of gradients from tangent spaces at randomly sampled points \emph{transported} to the tangent space of the current iterate.  In this manner, the algorithm can be characterized as a generalization of the Riemannian steepest descent method just as GS is a generalization of traditional steepest descent.  Assuming that the vector transport satisfies certain assumptions, including a \emph{locking condition}, the algorithm attains convergence guarantees on par with those for Algorithm~\ref{alg.gs}.

\subsection{SQP-GS for Constrained Optimization}
\label{subsec.constrained}

Curtis and Overton in \cite{CurtOver12} proposed a combination sequential quadratic programming (SQP) and gradient sampling method for solving constrained optimization problems involving potentially nonsmooth constraint functions, i.e.,
\bequation\label{eq.huatulco}
  \min_{x\in\R{n}}\ f(x)\ \ \st\ \ c(x) \leq 0,
\eequation
where $f : \R{n} \to \R{}$ and $c : \R{n} \to \R{n_c}$ are locally Lipschitz.   A key aspect of the proposed SQP-GS approach is that sampled points for the objective and each individual constraint function are generated independently.  With this important feature, it is shown that the algorithm, which follows a penalty-SQP strategy (e.g., see~\cite{Flet87}), attains convergence guarantees for minimizing an exact penalty function that are similar to those in \S\ref{sec.global}.  Moreover,  with the algorithm's penalty parameter updating strategy, it is shown that either the penalty function is driven to $-\infty$, the penalty parameter settles at a finite value and any limit point will be feasible for the constraints and stationary for the penalty function, or the penalty parameter will be driven to zero and any limit point of the algorithm will be stationary for a constraint violation measure.  As for other exact penalty function methods for nonlinear optimization, one can translate between guarantees for minimizing the exact penalty function and solving the constrained problem~\eqref{eq.huatulco}; in particular, if the problem is \emph{calm} \cite{Clar83,Rock82}, then at any local minimizer~$x_*$ of~\eqref{eq.huatulco} there exists a threshold for the penalty parameter beyond which $x_*$ will be a local minimizer of the penalty function.

Tang, Liu, Jian, and Li have also proposed in \cite{TangLiuJianLi14} a \emph{feasible} variant of the SQP-GS method in which the iterates are forced to remain feasible for the constraints and the objective function is monotonically decreasing throughout the optimization process.  This opens the door to employing a two-phase approach common for solving some optimization problems, where phase 1 is responsible for attaining a feasible point and phase 2 seeks optimality while maintaining feasibility.

\subsection{Derivative-Free Optimization}

Given its simple nature, gradient sampling has proved to be an attractive basis for the design of new algorithms even when gradient information cannot be computed explicitly.  Indeed, there have been a few variants of \emph{derivative-free} algorithms that have been inspired by gradient sampling.

The first algorithm for derivative-free optimization inspired by gradient sampling was proposed by Kiwiel in~\cite{Kiwi10}.  In short, in place of the gradients appearing in Algorithm~\ref{alg.gs}, this approach employs Gupal's estimates of gradients of the Steklov averages of $f$.  In this manner, function values only---specifically, $\Ocal(mn)$ per iteration---are required for convergence guarantees.  A less expensive \emph{incremental} version is also proposed.

Another derivative-free variant of GS, proposed by Hare and Nutini in \cite{HareNuti13}, is specifically designed for minimizing finite-max functions.  This approach exploits knowledge about which of these functions are \emph{almost active}---in terms of having value close to the objective function---at a particular point.  In so doing, rather than attempt to approximate gradients at nearby points, as in done in \cite{Kiwi10}, this approach only attempts to approximate gradients of almost active functions.  The convergence guarantees proved for the algorithm are similar to those for GS methods, though the practical performance is improved by the algorithm's tailored gradient approximation strategy.

Finally, we mention the \emph{manifold sampling} algorithm, proposed by Larson, Menickelly, and Wild in \cite{LarsMeniWild16}, for solving nonconvex problems where the objective function is the $\ell_1$-norm of a smooth vector function $F : \R{n} \to \R{r}$. While this approach does not employ a straightforward GS methodology in that it does not randomly sample points, it does employ a GS-type approach in the way that the gradient of a model of the objective function is constructed by solving a QP of the type in \eqref{prob.qp}.  Random sampling can be avoided in this construction since the algorithm can exploit knowledge of the signs of the elements of $F(x)$ at any $x \in \R{n}$ along with knowledge of $\clarkesub \|\cdot\|_1$.

\section{Applications}\label{sec.applications}

We mentioned in the introduction that the original gradient sampling paper \cite{BurkLewiOver02b} reported results for spectral abscissa optimization problems that had not been solved previously. The second gradient sampling paper \cite{BurkLewiOver05} reported
results for many more applications that again had not been solved previously: these included Chebyshev approximation by exponential
sums, eigenvalue product minimization for symmetric matrices, spectral and pseudospectral abscissa minimization, maximization of
the ``distance to instability", and fixed-order controller design by static output feedback. 

Subsequently, the GS algorithm played a key role in the \hanso\ (Hybrid Algorithm for Non-Smooth Optimization)\footnote{www.cs.nyu.edu/overton/software/hanso/}
and \hifoo\ (H-Infinity Fixed-Order Optimization)\footnote{www.cs.nyu.edu/overton/software/hifoo/} toolboxes. The former is a stand-alone code for unconstrained nonsmooth optimization while the latter is a more
specialized code used for the design of low-order controllers for linear dynamical systems with input and output,
computing fixed-order controllers by optimizing stability measures that are generally nonsmooth at local minimizers \cite{BurkHenrLewiOver06}.
\Hifoo\ calls \hanso\ to carry out the optimization.
The use of ``hybrid" in the expansion of the \hanso\ acronym indicated that, from its inception, \hanso\ combined
the use of both a quasi-Newton algorithm (BFGS) and Gradient Sampling.
The quasi-Newton method was used in an initial phase
which, rather surprisingly, typically worked very
effectively even in the presence of nonsmoothness,
very often providing a fast way to
approximate a local minimizer. This was followed by
a GS phase to refine the approximation, typically verifying a loose measure of local optimality.
The \hifoo\ toolbox has been used successfully in a wide variety of applications, including
synchronization of heterogeneous multi-agent systems and networks,
design of motorized gimbals that stabilize an angular motion of an optical
payload around an axis, 
flight control via static output feedback,
robust observer-based fault detection and isolation,
influence of tire damping on control of quarter-car suspensions,
flexible aircraft lateral flight dynamic control,
optimal control of aircraft with a blended wing body,
vibration control of a fluid/plate system,
controller design of a nose landing gear steering system, 
bilateral teleoperation for minimally invasive surgery,
design of an aircraft controller for improved gust
alleviation and passenger comfort,
robust controller design for a proton exchange membrane fuel cell
system, design of power systems controllers, and design of winding systems for elastic web
materials --- for a full list of references, see \cite{CurtMitcOver17}. 

The successful use of BFGS in \hanso\ and \hifoo\ led to papers on the use of quasi-Newton methods in the
nonsmooth context, both for unconstrained \cite{LewiOver13} and constrained \cite{CurtMitcOver17} optimization.
The latter paper introduced a new BFGS-SQP method for nonsmooth constrained optimization and compared it with the SQP-GS method discussed
in \S~\ref{subsec.constrained} on a suite of challenging static output feedback controller design problems, half of them
non-Lipschitz (spectral radius minimization) and half of them locally Lipschitz (pseudospectral radius minimization).
It was found that although the BFGS-SQP method was much faster than SQP-GS, nonetheless, if the latter method based on Gradient Sampling
was allowed sufficient running time, it frequently found better approximate solutions than the former method based on BFGS
in a well defined sense, evaluated using ``relative minimization profiles".
Interestingly, this was particularly pronounced on the non-Lipschitz problems, despite the fact that the GS convergence theory
does not extend to this domain. See \S\ref{sec.nonlip} for further discussion of this issue.

Finally, we mention an interesting application of GS to robot path planning \cite{TrafMitc16}.
This work is based on the observation that 
shortest paths generated through gradient descent
on a value function have a tendency to chatter and/or require
an unreasonable number of steps to synthesize. The authors demonstrate that the GS algorithm
can largely alleviate this problem. For systems
subject to state uncertainty whose state estimate is tracked
using a particle filter, they proposed the Gradient Sampling with
Particle Filter (GSPF) algorithm, which uses the particles as
the locations in which to sample the gradient. At each step,
the GSPF efficiently finds a consensus direction suitable for all
particles or identifies the type of stationary point on which it
is stuck. If the stationary point is a minimum, the system has
reached its goal (to within the limits of the state uncertainty)
and the algorithm terminates; otherwise, the authors propose
two approaches to find a suitable descent direction. They illustrated
the effectiveness of the GSPF on several examples using well known
robot simulation environments. This work was recently extended and modified in \cite{EstrMitc18}, where
the practical effectiveness of both the GSPF algorithm and the new modification was demonstrated on a Segway Robotic Mobility Platform.

\section{Conclusion and Future Directions}\label{sec.conclusion}

Gradient sampling is a conceptually straightforward approximate steepest descent method.
With a solid convergence theory, the method has blossomed into a powerful methodology for solving nonsmooth minimization problems. 
The theme of our treatment of GS in this work has been to emphasize the fact that, even though the basic
algorithm has been enhanced and extended in various ways, the foundation of the approach is fundamentally simple in nature.

We have also corrected an oversight in the original GS theory (i.e., that the convergence results depend on assuming that the set of points over which the Lipschitz function $f$ is continuously differentiable has full measure, although we do not have a counterexample to convergence of GS in the absence of this assumption).
At the same time we have loosened the requirements of the algorithm (showing that $f$ need only be differentiable at the iterates and sampled points).  An open question that still remains is whether one can extend the GS theory to broader function classes, such as the case where $f$ is assumed to be semi-algebraic but not necessarily locally Lipschitz or directionally Lipschitz.

Opportunities for extending GS theory for broader function classes may include connecting the algorithm to other randomized/stochastic optimization methods.  For example, one might 
view the algorithm as a stochastic-gradient-like method applied to a smoothed objective.  (A similar philosophy underlies the analysis by 
Nesterov and Spokoiny in \cite{NestSpok17}).)  More precisely, given a locally Lipschitz objective  $f$,  consider a smoothing~$f_\epsilon$  whose value at any point  $x$  is given by the mean value of  $f$  over the ball $\Bmbb(x,\epsilon)$.  The GS algorithm uses gradients of  $f$  at uniformly distributed random points in this ball.  
Notice that each such gradient can also be viewed as a stochastic gradient for the smoothing $f_\epsilon$ in the sense that its expectation is the gradient of $f_\epsilon$ at $x$.  Thus, one might hope to prove convergence results for a GS algorithm (with predetermined stepsizes rather than line searches) that parallel convergence theory for stochastic gradient methods. Recent work by Davis, Drusvyatskiy, Kakade and Lee \cite{DaviDrusKakaLee18} gives convergence results for stochastic \emph{subgradient} methods on
a broad class of problems.

Another potentially interesting connection is with the work of Davis and Drusvyatskiy \cite{DaviDrus18} on stochastic model-based optimization. 
Consider a GS variant that successively minimizes stochastic models of the objective function $f$, where we assume for simplicity that $f$ is a globally Lipschitz convex function.
In this variant, rather than moving along the direction $-g^k$, consider instead the construction of a cutting plane approximation of $f$ from its affine minorants at the current iterate $x^k$ and the sampled points $\{x^{k,i}\}$, augmented by the proximal term $\beta_k \|x - x^k\|^2$, where $\{\beta_k\}$ is a predetermined sequence.
Suppose that the next iterate is chosen as the minimizer of this model; for a given $k$ and with $\beta_k = 1$, by equation \eqref{prob.qp}, 
this scheme and GS produce similar descent directions as the sampling radius tends to zero.
It follows from the results of \cite{DaviDrus18} that the expected norm of the gradient of the Moreau envelope \cite[p.~6]{DaviDrus18} is reduced below $\epsilon$ in 
$\Ocal(\epsilon^{-4})$ iterations.
In fact, the assumptions on $f$ in  \cite{DaviDrus18} are substantially weaker than convexity, and do not require any property of the set on which $f$ is continuously differentiable.

Connecting the convergence theory for GS to stochastic methods as suggested in the previous two paragraphs could be enlightening.  However, while stochastic methods are often designed for settings in which it is intractable to compute function values exactly---a feature reflected in the fact that the analyses for such methods are based on using predetermined stepsize sequences---the GS methodology has so far been motivated by problems for which functions and gradients are tractable to compute.  In such settings, the line search in Algorithm~GS is an ingredient that is crucial to its practical success.

\appendix
\section{On the Assumption of Full Measure of the Set $D$}\label{app.fullmeasure}

Recall that $D$ is defined to be the set of points on which the locally Lipschitz function $f$ is continuously differentiable.
There are two ways in which the analyses in \cite{Kiwi07,BurkLewiOver05} actually depend on $D$ having full measure:
\begin{enumerate}
\item The most obvious is that both papers require that the points sampled in each iteration should lie in~$D$, and a statement is made in both papers that this occurs with probability one, but this is not the case if $D$ is assumed only to be an open dense subset of $\R{n}$. However, as already noted earlier and justified in Appendix \ref{app.fdifOK}, this requirement can be relaxed, as in Algorithm~\ref{alg.gs} given in \S\ref{sec.algorithm}, to require only that $f$ be differentiable at the sampled points.

\item The set $D$ must have full measure for Property \ref{prop}, stated below, to hold.  The proofs in \cite{BurkLewiOver05,Kiwi07} depend critically on this property, which follows from \cite[Eq.~1.2]{BurkLewiOver02a} (where it was stated without proof).
For completeness we give a proof here, followed by an example that
demonstrates the necessity of the full measure assumption. 
\end{enumerate}

\begin{prop}\label{prop}
Assume that $D$ has full measure and let
  \[
       G_\epsilon(x):= \cl\co\nabla f\left( \Bmbb(x,\epsilon)\cap D \right).
\]
For all $\epsilon > 0$ and all $x\in \R{n}$, one has $\clarkesub f(x) \subseteq G_{\epsilon}(x)$, where the Clarke subdifferential 
$\clarkesub f$ is defined in \eqref{eq.clarkesub}.
\end{prop}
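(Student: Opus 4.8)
The plan is to reduce the claimed inclusion $\clarkesub f(x)\subseteq G_\epsilon(x)$ to the following local statement and then establish that statement by a separation argument combined with Fubini's theorem, which is exactly where full measure of $D$ (rather than mere density) is needed. \emph{Local claim: if $f$ is locally Lipschitz and differentiable at a point $y$, and $D$ has full measure, then $\nabla f(y)\in\cl\co\nabla f(\Bmbb(y,\delta)\cap D)$ for every $\delta>0$.} Granting this, the proposition follows at once: by \eqref{eq.clarkesub}, every $v\in\clarkesub f(x)$ is a finite convex combination (Carath\'eodory) of limits $\lim_{j\to\infty}\nabla f(y^j)$ with $y^j\to x$ and $f$ differentiable at each $y^j$; for $j$ large one has $\|y^j-x\|<\epsilon$, so the local claim with any $\delta\in(0,\epsilon-\|y^j-x\|]$ gives $\Bmbb(y^j,\delta)\subseteq\Bmbb(x,\epsilon)$ and hence $\nabla f(y^j)\in\cl\co\nabla f(\Bmbb(x,\epsilon)\cap D)=G_\epsilon(x)$. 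Since $G_\epsilon(x)$ is closed it contains each such limit, and since it is convex it contains $v$.

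To prove the local claim I would argue by contradiction. After translating so that $y=0$ and replacing $f$ by $g(z):=f(z)-\nabla f(0)^Tz$ --- which is locally Lipschitz, differentiable at $0$, with $\nabla g(0)=0$ --- suppose $0\notin\cl\co\nabla g(\Bmbb(0,\delta)\cap D)$. A separating hyperplane then yields a unit vector $u$ and $\alpha>0$ with $\nabla g(z)^Tu\ge\alpha$ for every $z\in\Bmbb(0,\delta)\cap D$. Using differentiability of $g$ at $0$, pick $\delta'\in(0,\delta]$ no larger than a Lipschitz radius of $g$ at $0$ and small enough that $|g(z)-g(0)|\le\tfrac{\alpha}{4}\|z\|$ whenever $\|z\|\le\delta'$, and set $s:=-\delta'/2$.

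The only genuinely delicate point is that the segment $\{tu:t\in[s,0]\}$ may lie entirely outside $D$, so I would integrate $\nabla g^Tu$ along a nearby \emph{parallel} segment instead. Choosing coordinates with $u=e_1$ and writing $z=(z_1,z')\in\R{}\times\R{n-1}$, Fubini applied to the null set $\R{n}\setminus D$ shows that for almost every $z'$ the slice $\{t:(t,z')\in D\}$ has full measure in $\R{}$; fix such a ``good'' $z'$ with $\|z'\|\le\min\{\alpha|s|/(4L),\ |s|/2,\ \delta'/2\}$, where $L$ is the Lipschitz constant of $g$ near $0$. Then $t\mapsto g(t,z')$ is Lipschitz, hence absolutely continuous, on $[s,0]$, and at each $t$ with $(t,z')\in D$, which holds for almost every $t$, it is differentiable with derivative $\nabla g(t,z')^Tu\ge\alpha$ since $(t,z')\in\Bmbb(0,\delta)\cap D$. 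Integrating gives $g(s,z')-g(0,z')\le-\alpha|s|$, and combining with $|g(0,z')-g(0)|\le L\|z'\|\le\tfrac{\alpha}{4}|s|$ gives $g(s,z')-g(0)\le-\tfrac{3}{4}\alpha|s|$. But $\|(s,z')\|\le|s|+\|z'\|\le\tfrac{3}{2}|s|\le\delta'$, so differentiability of $g$ at $0$ forces $g(s,z')-g(0)\ge-\tfrac{\alpha}{4}\cdot\tfrac{3}{2}|s|=-\tfrac{3}{8}\alpha|s|$ --- a contradiction. Hence the local claim, and with it the proposition.

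I expect the main obstacle to be precisely this transversal-line construction: making rigorous that $\nabla g$ can be integrated along the perturbed segment (absolute continuity of the restriction together with the identification of its a.e.\ derivative with $\nabla g\cdot u$ on the full-measure set $D$), and tracking the elementary inequalities so that the perpendicular displacement $\|z'\|$ is negligible relative to $|s|$. The separation step, the passage from the local claim to the full inclusion via closedness and convexity of $G_\epsilon(x)$, and the invocation of \eqref{eq.clarkesub} are all routine. (It may be worth remarking that the local claim is essentially the inclusion half of Clarke's characterization of $\clarkesub f$ by gradient sequences that avoid any prescribed null set.)
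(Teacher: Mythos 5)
Your proof is correct, but it takes a genuinely different route from the paper. The paper's proof leans on Clarke's theorem \cite[Thm 2.5.1]{Clar83}, which says that the Clarke subdifferential is unchanged if the gradient sequences in \eqref{eq.clarkesub} are required to avoid any prescribed null set $S$; taking $S=\R{n}\setminus D$ re-expresses $\clarkesub f(x)$ via sequences lying in $D$, and then Carath\'eodory plus closedness and convexity of $G_\epsilon(x)$ finish the argument in a few lines. You instead work directly with the definition \eqref{eq.clarkesub} (sequences of mere differentiability points, not points of $D$) and prove the needed measure-theoretic fact from scratch: your ``local claim'' that $\nabla f(y)\in\cl\co\nabla f(\Bmbb(y,\delta)\cap D)$ at every differentiability point $y$, established by strict separation, Fubini slicing of the null set $\R{n}\setminus D$, and integration of the Lipschitz restriction along a parallel segment, contradicting differentiability at $y$. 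As you note, this is essentially a direct proof of the relevant half of Clarke's characterization, so in substance you are reproving the cited black box. What each approach buys: the paper's proof is shorter and delegates the measure theory to a standard reference, while yours is self-contained, pinpoints exactly where full measure (as opposed to mere density) of $D$ enters (the Fubini step would fail for a dense open set of small measure, consistent with the paper's Example 1), and meshes naturally with the paper's later relaxation (Appendix B) that only differentiability, not membership in $D$, is needed at the relevant points. The details you flag as delicate --- absolute continuity of $t\mapsto g(t,z')$, identification of its a.e.\ derivative with $\nabla g(t,z')^Tu\ge\alpha$ on the full-measure slice, and the bookkeeping making $\|z'\|$ negligible against $|s|$ --- all check out, including the degenerate case $n=1$ where the Fubini step is vacuous and one integrates over $D\cap[s,0]$ directly.
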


\begin{proof}
  Let $x\in \R{n}$ and $v\in \clarkesub f(x)$.  We have from  \cite[Thm 2.5.1]{Clar83} that, for any set $S$ with zero measure,
\[
\clarkesub f(x) = \co\left\{ \lim_j \nabla f(y^j) : y^j\to x\ \text{where}\ y^j\notin S\ \text{and}\ f\ \text{is differentiable at}\  y^j, \text{for all}\ j \in \N{}\right\}.
\]
In particular, since $D$ has full measure and $f$ is differentiable on $D$, it follows that
\[
\clarkesub f(x) = \co\left\{ \lim_j \nabla f(y^j) : y^j\to x\ \text{with}\ y^j\in D\ \text{for all}\ j \in \N{}\right\}\text.
\]
Considering this last relation and Carath\'{e}odory's theorem, it follows that $v \in \co\left\{ \xi^1,\hdots,\xi^{n+1} \right\}$, where, for all $i\in \{1,\hdots,n+1\}$, one has $\xi^i = \lim_j \nabla f(y^{j,i})$ for some sequence $\{y^{j,i}\}_{j\in \N{}} \subset D$ converging to $x$. Hence, there must exist a sufficiently large $j_i \in \N{}$ such that
\[
y^{j,i}\in \Bmbb(x,\epsilon)\cap D \implies \nabla f(y^{j,i})\in \nabla f\left(\Bmbb(x, \epsilon)\cap D\right) \subseteq \co\nabla f\left(\Bmbb(x, \epsilon)\cap D\right)\ \ \text{for all}\ \ j\geq j_i.
\]
Recalling that $G_\epsilon (x)$ is the closure of $\co\nabla f\left(\Bmbb(x, \epsilon)\cap D\right)$, it follows that $\xi^i\in G_\epsilon (x)$ for all $i\in \{1,\hdots,n+1\}$.  Moreover, since $G_\epsilon (x)$ is convex, it follows that $v\in G_\epsilon (x)$.  The result follows since $x \in \R{n}$ and $v\in \clarkesub f(x)$ were arbitrarily chosen.
\end{proof}

With the assumption that $D$ has full measure, Property \ref{prop} holds and hence the proofs of the results in \cite{BurkLewiOver05,Kiwi07} are all valid. In particular, the proof of (ii) in \cite[Lemma~3.2]{Kiwi07}, which borrows from \cite[Lemma~3.2]{BurkLewiOver05}, depends on Property \ref{prop}. See also the top of \cite[p.~762]{BurkLewiOver05}.


The following example shows that Property~\ref{prop} might not hold if $D$ is assumed only to be an open dense set, not necessarily of full measure.
\begin{examp}
Let $\delta \in (0,1)$ and $\{q_k\}_{k\in \mathbb{N}}$ be the enumeration of the rational numbers in $(0,1)$. Define
\[
D:= \bigcup_{k=1}^\infty \mathcal{Q}_k\text{, where }\mathcal{Q}_k:=\left(q_k - \frac{\delta}{2^{k+1}} , q_k + \frac{\delta}{2^{k+1}}\right)\text.
\]
Clearly, its Lebesgue measure satisfies $0<\lambda(D) \leq \delta$. Moreover, the set $D$ is an open dense subset of $[0,1]$. Now, let $i_{D}:[0,1]\to \mathbb{R}$ be the indicator function of the set $D$,
\[
i_{D}(x) = \left\{
\begin{array}{cc}
1\text, & \text{if}\ x\in D \\
0\text, & \text{if}\ x\notin D
\end{array}\text.
\right.
\]
Then, considering the Lebesgue integral, we define the function $f:[0,1]\to \mathbb{R}$,
\[
f(x) = \int_{[0,x]} i_{D}d\lambda\text.
\]
Let us prove that $f$ is a Lipschitz continuous function on $(0,1)$. To see this, note that given any $a,b\in (0,1)$ with $b>a$, it follows that
\[
|f(b) - f(a)| = \left| \int_{[0,b]} i_{D}d\lambda - \int_{[0,a]} i_{D}d\lambda \right|
              = \left| \int_{(a,b]} i_{D}d\lambda \right|
              \leq \int_{(a,b]} 1d\lambda
              = b-a\text,
\]
which ensures that $f$ is a Lipschitz continuous function on $(0,1)$. Consequently, the Clarke subdifferential set of $f$ at any point in $(0,1)$ is well defined. Moreover, we claim that,
 for all $k\in \mathbb{N}$, $f$ is continuously differentiable at any point $q\in \mathcal{Q}_k$ and the following holds 
\begin{equation}\label{eq:example1}
\tag{1}
f'(q) = i_{D}(q) = 1\text.
\end{equation}
Indeed, given any $q\in \mathcal{Q}_k$, we have
\[
f(q+t) - f(q) = \int_{[0,q+t]}i_{D}d\lambda - \int_{[0,q]}i_{D}d\lambda = \int_{(q,q+t]}i_{D}d\lambda\text{, for }t>0\text. 
\]
Since $\mathcal{Q}_k$ is an open set, we can find $\overline t>0$ such that $[q,q+t]\subset \mathcal{Q}_k\subset D$, for all $t\leq \overline t$. Hence, given any $t\in (0,\overline {t}]$, it follows that 
\[
f(q+t) - f(q) = \int_{(q,q+t]} 1d\lambda = t\ \Longrightarrow \lim_{t\searrow 0}\frac{f(q+t) - f(q)}{t} = 1 = i_{D}(q)\text.
\]
The same reasoning can be used to see that the left derivative of $f$ at $q$ exists and it is equal to $i_{D}(q)$. Consequently, we have $f'(q) = i_{D}(q) = 1$ for all $q\in \mathcal{Q}_k$, which yields that $f$ is continuously differentiable on $D$.

By the Lebesgue differentiation theorem, we know that $f'(x) = i_{D}(x)$ almost everywhere. Since the set $[0,1]\setminus D$ does not have measure zero, this means that there must exist $z\in [0,1]\setminus D$ such that $f'(z) = i_{D}(z) = 0$. Defining $\epsilon := \min\{z,1-z\}/2$, we see, by~\eqref{eq:example1}, that the set
\[
G_\epsilon(z):= \cl\conv\nabla f([z-\epsilon,z+\epsilon]\cap D)
\]
is a singleton $G_\epsilon(z)=\{1\}$. However, since $f'(z) = 0$, it follows that $0\in \clarkesub f(z)$, which implies $\clarkesub f(z)\not\subset G_\epsilon(z)$.  
\end{examp}


Note that it is stated on \cite[p.~754]{BurkLewiOver05} and \cite[p.~381]{Kiwi07} that the following holds: 
for all $0 \leq \epsilon_1 < \epsilon_2$ and all $x\in\R{n}$, one has $\bar\partial_{\epsilon_1} f(x) \subseteq G_{\epsilon_2}(x)$.
Property \ref{prop} is a special case of this statement with $\epsilon_1=0$, and hence this statement too holds only under the
full measure assumption. 

Finally, it is worth mentioning that in practice, the full measure assumption on $D$ usually holds. In particular, whenever a real-valued function is semi-algebraic (or, more generally, ``tame") --- in other words, for all practical purposes virtually always --- it is continuously differentiable on an open set of full measure.  Hence, the original proofs hold in such contexts.

\section{On Relaxing the Requirement that the Objective is Continuously Differentiable at the Iterates and Sampled Points}\label{app.fdifOK}

In this appendix, we summarize why it is not necessary that the iterates and sampled points of the algorithm lie in the set $D$ in which $f$ is continuously differentiable, and that rather it is sufficient to ensure that $f$ is differentiable at these points, as in Algorithm \ref{alg.gs}. We do this by outlining how to modify the proofs in \cite{Kiwi07} to extend to this case.
\begin{enumerate}
\item That the gradients at the sampled points $\{x^{k,j}\}$ exist follows with probability one from Rademacher's theorem, while the existence of the gradients at the iterates $\{x^k\}$ is ensured by the statement of Algorithm \ref{alg.gs}.  Notice that the proof of part (ii) of \cite[Theorem~3.3]{Kiwi07} still holds in our setting with the statement that the components of the sampled points are ``sampled independently and uniformly from
$\Bmbb(x^k,\epsilon)\cap D$" replaced with ``sampled independently and uniformly from $\Bmbb(x^k,\epsilon)$''.

\item One needs to verify that $f$ being differentiable at $x^k$ is enough to ensure that the line search procedure presented in~\eqref{eq.armijo} terminates finitely. This is straightforward. Since $\nabla f(x^k)$ exists, it follows that the directional derivative along any vector $d\in \R{n}\setminus\{0\}$ is given by $f'(x^k; d) = \nabla f(x^k)^Td$. Furthermore, since $-\nabla f(x^k)^Tg^k \leq -\|g^k\|_2^2$ (see \cite[p.\ 756]{BurkLewiOver05}), it follows, for any $\beta \in (0,1)$, that there exists $\overline t>0$ such that
  \bequationNN
    f(x^k-tg^k) < f(x^k) - t\beta\|g^k\|_2^2\ \ \text{for any}\ \ t \in (0,\overline t).
  \eequationNN
  This shows that the line search is well defined.
\item The only place where we actually need to modify the proof in \cite{Kiwi07} concerns item (ii) in Lemma 3.2, where it is stated that
$\nabla f(x^k) \in G_\epsilon(\overline x)$ (for a particular point $\overline x$) because $x^k \in \Bmbb(\overline x,\epsilon/3) \cap D$; the latter is not true if $x^k \not \in D$.  However, using Property \ref{prop}, we have 
\bequationNN
  \nabla f(x^k)\in \clarkesub f(x^k)\subset G_{\epsilon/3}(x^k) \subset G_{\epsilon}(\overline x) \text{ when } x^k\in \Bmbb(\overline x,\epsilon/3),
\eequationNN
so $\nabla f(x^k) \in G_\epsilon(\overline x)$ even when $x^k\not \in D$.
\end{enumerate}

Finally, although it was convenient in Appendix A to state Property~1 in terms of $D$, it actually holds if $D$ is replaced by any full measure set
on which $f$ is differentiable.
Nonetheless, it is important to note that the proofs of the results in \cite{BurkLewiOver05,Kiwi07} \emph{do} require that $f$ be \emph{continuously} differentiable on $D$. This assumption is used in the proof of (i) in \cite[Lemma 3.2]{Kiwi07}.

\bibliographystyle{alpha}
\bibliography{gradient_sampling}

\begin{thebibliography}{DDKL18}

\bibitem[AO17]{AslOver17}
A.~Asl and M.~L. Overton.
\newblock {Analysis of the Gradient Method with an Armijo-Wolfe Line Search on
  a Class of Nonsmooth Convex Functions}, November 2017.
\newblock arXiv:1711.08517.

\bibitem[BB88]{BarzBorw88}
J.~Barzilai and J.~M. Borwein.
\newblock {Two-Point Step Size Gradient Methods}.
\newblock {\em IMA Journal of Numerical Analysis}, 8(1):141--148, 1988.

\bibitem[BHLO06]{BurkHenrLewiOver06}
J.~V. Burke, D.~Henrion, A.~S. Lewis, and M.~L. Overton.
\newblock {HIFOO} - a {MATLAB} package for fixed-order controller design and
  {$H_\infty$} optimization.
\newblock In {\em Fifth IFAC Symposium on Robust Control Design, Toulouse},
  2006.

\bibitem[BL18]{BurkLin17}
J.~V. Burke and Q.~Lin.
\newblock {The Gradient Sampling Algorithm for Directionally {L}ipschitzian
  Functions}, 2018.
\newblock To appear.

\bibitem[BLO02a]{BurkLewiOver02a}
J.~V. Burke, A.~S. Lewis, and M.~L. Overton.
\newblock {Approximating Subdifferentials by Random Sampling of Gradients}.
\newblock {\em Math. Oper. Res.}, 27(3):567--584, 2002.

\bibitem[BLO02b]{BurkLewiOver02b}
J.~V. Burke, A.~S. Lewis, and M.~L. Overton.
\newblock {Two Numerical Methods for Optimizing Matrix Stability}.
\newblock {\em Linear Algebra Appl.}, 351/352:117--145, 2002.

\bibitem[BLO05]{BurkLewiOver05}
J.~V. Burke, A.~S. Lewis, and M.~L. Overton.
\newblock {A Robust Gradient Sampling Algorithm for Nonsmooth, Nonconvex
  Optimization}.
\newblock {\em SIAM Journal on Optimization}, 15(3):751--779, 2005.

\bibitem[BMR14]{BirgMartRayd14}
E.~Birgin, J.~Martinez, and M.~Raydan.
\newblock {Spectral Projected Gradient Methods: Review and Perspectives}.
\newblock {\em Journal of Statistical Software, Articles}, 60(3):1--21, 2014.

\bibitem[BO01]{BurkOver01}
J.~V. Burke and M.~L. Overton.
\newblock {Variational Analysis of Non-{L}ipschitz Spectral Functions}.
\newblock {\em Math. Program.}, 90(2, Ser. A):317--351, 2001.

\bibitem[Cla75]{Clar75}
F.~H. Clarke.
\newblock {Generalized Gradients and Applications}.
\newblock {\em Trans. Amer. Math. Soc.}, 205:247--262, 1975.

\bibitem[Cla83]{Clar83}
F.~H. Clarke.
\newblock {\em {Optimization and Nonsmooth Analysis}}.
\newblock Wiley, New York, 1983.
\newblock Reprinted by {SIAM}, {P}hiladelphia, 1990.

\bibitem[CLR07]{CremLoreRayd07}
A.~Crema, M.~Loreto, and M.~Raydan.
\newblock {Spectral Projected Subgradient with a Momentum Term for the
  Lagrangean Dual Approach}.
\newblock {\em Computers and Operations Research}, 34(10):3174--3186, 2007.

\bibitem[CMO17]{CurtMitcOver17}
F.~E. Curtis, T.~Mitchell, and M.~L. Overton.
\newblock {A BFGS-SQP method for Nonsmooth, Nonconvex, Constrained Optimization
  and its Evaluation using Relative Minimization Profiles}.
\newblock {\em Optimization Methods and Software}, 32(1):148--181, 2017.

\bibitem[CO12]{CurtOver12}
F.~E. Curtis and M.~L. Overton.
\newblock {A Sequential Quadratic Programming Algorithm for Nonconvex,
  Nonsmooth Constrained Optimization}.
\newblock {\em {SIAM Journal on Optimization}}, 22(2):474--500, 2012.

\bibitem[CQ13]{CurtQue13}
F.~E. Curtis and X.~Que.
\newblock {An Adaptive Gradient Sampling Algorithm for Nonsmooth Optimization}.
\newblock {\em Optimization Methods and Software}, 28(6):1302--1324, 2013.

\bibitem[CQ15]{CurtQue15}
F.~E. Curtis and X.~Que.
\newblock {A Quasi-Newton Algorithm for Nonconvex, Nonsmooth Optimization with
  Global Convergence Guarantees}.
\newblock {\em Mathematical Programming Computation}, 7(4):399--428, 2015.

\bibitem[CRZ17]{CurtRobiZhou17}
F.~E. Curtis, D.~P. Robinson, and B.~Zhou.
\newblock {Self-Correcting Variable-Metric Algorithms for Nonsmooth
  Optimization}.
\newblock Technical Report 17T-012, COR@L Laboratory, Department of ISE, Lehigh
  University, 2017.

\bibitem[DD18]{DaviDrus18}
D.~Davis and D.~Drusvyatskiy.
\newblock {Stochastic Model-Based Minimization of Weakly Convex Functions},
  March 2018.
\newblock arXiv:1803.06523.

\bibitem[DDKL18]{DaviDrusKakaLee18}
D.~Davis, D.~Drusvyatskiy, S.~Kakade, and J.~D. Lee.
\newblock {Stochastic Subgradient Method Converges on Tame Functions}, April
  2018.
\newblock arXiv:1804.07795.

\bibitem[EM18]{EstrMitc18}
A.~Estrada and I.~M. Mitchell.
\newblock {Control Synthesis and Classification for Unicycle Dynamics using the
  Gradient and Value Sampling Particle Filters}.
\newblock In {\em Proceedings of the IFAC Conference on Analysis and Design of
  Hybrid Systems}, pages 108--114, July 2018.
\newblock To appear.

\bibitem[Fle87]{Flet87}
R.~Fletcher.
\newblock {\em {Practical Methods of Optimization}}.
\newblock Wiley-Interscience, New York, NY, USA, {Second} edition, 1987.

\bibitem[Fle05]{Flet05}
R.~Fletcher.
\newblock {On the Barzilai-Borwein Method}.
\newblock In L.~Qi, K.~Teo, and X.~Yang, editors, {\em Optimization and Control
  with Applications}, pages 235--256. Springer US, Boston, MA, USA, 2005.

\bibitem[GL17]{GuoLewi17}
J.~Guo and A.~S. Lewis.
\newblock {Nonsmooth Variants of {P}owell's {BFGS} Convergence Theorem}.
\newblock {\em SIAM J. Optim.}, 2017.
\newblock To appear.

\bibitem[Gol77]{Gold77}
A.~A. Goldstein.
\newblock {Optimization of {L}ipschitz Continuous Functions}.
\newblock {\em Math. Programming}, 13(1):14--22, 1977.

\bibitem[HN13]{HareNuti13}
W.~Hare and J.~Nutini.
\newblock {A Derivative-Free Approximate Gradient Sampling Algorithm for Finite
  Minimax Problems}.
\newblock {\em Computational Optimization and Applications}, 56(1):1--38, Sep
  2013.

\bibitem[HSS16]{HeloSantSimo16}
E.~S. Helou, S.~A. Santos, and L.~E.~A. Sim{\~o}es.
\newblock {On the Differentiability Check in Gradient Sampling Methods}.
\newblock {\em Optimization Methods and Software}, 31(5):983--1007, 2016.

\bibitem[HSS17]{HeloSantSimo17}
E.~S. Helou, S.~A. Santos, and L.~E.~A. Sim{\~o}es.
\newblock {On the Local Convergence Analysis of the Gradient Sampling Method
  for Finite Max-Functions}.
\newblock {\em Journal of Optimization Theory and Applications},
  175(1):137--157, 2017.

\bibitem[HU17]{HossUsch17}
S.~Hosseini and A.~Uschmajew.
\newblock {A Riemannian Gradient Sampling Algorithm for Nonsmooth Optimization
  on Manifolds}.
\newblock {\em SIAM Journal on Optimization}, 27(1):173--189, 2017.

\bibitem[Kiw86]{Kiwi86}
K.~C. Kiwiel.
\newblock {A Method for Solving Certain Quadratic Programming Problems Arising
  in Nonsmooth Optimization}.
\newblock {\em IMA Journal of Numerical Analysis}, 6(2):137--152, 1986.

\bibitem[Kiw07]{Kiwi07}
K.~C. Kiwiel.
\newblock {Convergence of the Gradient Sampling Algorithm for Nonsmooth
  Nonconvex Optimization}.
\newblock {\em SIAM Journal on Optimization}, 18(2):379--388, 2007.

\bibitem[Kiw10]{Kiwi10}
K.~C. Kiwiel.
\newblock {A Nonderivative Version of the Gradient Sampling Algorithm for
  Nonsmooth Nonconvex Optimization}.
\newblock {\em SIAM Journal on Optimization}, 20(4):1983--1994, 2010.

\bibitem[LACR17]{LoreAponCoreRayd17}
M.~Loreto, H.~Aponte, D.~Cores, and M.~Raydan.
\newblock {Nonsmooth Spectral Gradient Methods for Unconstrained Optimization}.
\newblock {\em EURO Journal on Computational Optimization}, 5(4):529--553,
  2017.

\bibitem[Lew02]{Lewi02}
A.~S. Lewis.
\newblock {Active Sets, Nonsmoothness, and Sensitivity}.
\newblock {\em SIAM Journal on Optimization}, 13(3):702--725, 2002.

\bibitem[Lin09]{Lin09}
Q.~Lin.
\newblock {\em Sparsity and Nonconvex Nonsmooth Optimization}.
\newblock PhD thesis, Department of Mathematics, University of Washington,
  2009.

\bibitem[LMW16]{LarsMeniWild16}
J.~Larson, M.~Menickelly, and S.~M. Wild.
\newblock {Manifold Sampling for $\ell_1$ Nonconvex Optimization}.
\newblock {\em SIAM Journal on Optimization}, 26(4):2540--2563, 2016.

\bibitem[LO13]{LewiOver13}
A.~S. Lewis and M.~L. Overton.
\newblock {Nonsmooth Optimization via Quasi-{N}ewton Methods}.
\newblock {\em Math. Program.}, 141(1-2, Ser. A):135--163, 2013.

\bibitem[LOS00]{LemaOustSaga00}
C.~Lemar\'echal, F.~Oustry, and C.~Sagastiz\'abal.
\newblock {The U-Lagrangian of a Convex Function}.
\newblock {\em Transactions of the American Mathematical Society},
  352(2):711--729, 2000.

\bibitem[MS05]{MiffSaga05}
R.~Mifflin and C.~Sagastiz{\'a}bal.
\newblock {A VU-Algorithm for Convex Minimization}.
\newblock {\em Mathematical Programming}, 104(2-3):583--608, 2005.

\bibitem[NS17]{NestSpok17}
Y.~Nesterov and V.~Spokoiny.
\newblock {Random Gradient-Free Minimization of Convex Functions}.
\newblock {\em Found. Comput. Math.}, 17(2):527--566, 2017.

\bibitem[Ray93]{Rayd93}
M.~Raydan.
\newblock {On the Barzilai and Borwein Choice of Steplength for the Gradient
  Method}.
\newblock {\em IMA Journal of Numerical Analysis}, 13(3):321--326, 1993.

\bibitem[Ray97]{Rayd97}
M.~Raydan.
\newblock {The Barzilai and Borwein Gradient Method for the Large Scale
  Unconstrained Minimization Problem}.
\newblock {\em SIAM Journal on Optimization}, 7(1):26--33, 1997.

\bibitem[Roc82]{Rock82}
R.~T. Rockafellar.
\newblock {Lagrange Multipliers and Subderivatives of Optimal Value Functions
  in Nonlinear Programming}.
\newblock In D.~C. Sorensen and R.~J.-B. Wets, editors, {\em Mathematical
  Programming Study}, Mathematical Programming Studies, chapter~3, pages
  28--66. North-Holland Publishing Company, Amsterdam, 1982.

\bibitem[Roc98]{RockWets98}
R.~J.-B. Rockafellar, R. T.and~Wets.
\newblock {\em Variational analysis}, volume 317 of {\em Grundlehren der
  Mathematischen Wissenschaften [Fundamental Principles of Mathematical
  Sciences]}.
\newblock Springer-Verlag, Berlin, 1998.

\bibitem[TLJL14]{TangLiuJianLi14}
C.-M. Tang, S.~Liu, J.-B. Jian, and J.-L. Li.
\newblock {A Feasible SQP-GS Algorithm for Nonconvex, Nonsmooth Constrained
  Optimization}.
\newblock {\em Numerical Algorithms}, 65(1):1--22, Jan 2014.

\bibitem[TM16]{TrafMitc16}
N.~Traft and I.~M. Mitchell.
\newblock {Improved Action and Path Synthesis using Gradient Sampling}.
\newblock In {\em Proceedings of the IEEE Conference on Decision and Control},
  pages 6016--6023, December 2016.

\end{thebibliography}

\end{document}